\theoremstyle{plain}
\newtheorem{theorem}{Theorem}
\newtheorem{corollary}[theorem]{Corollary}
\theoremstyle{remark}
\newcommand{\C}{\mathbb{C}}
\newcommand{\R}{\mathbb{R}}
\newcommand{\N}{\mathbb{N}}
\newcommand{\Z}{\mathbb{Z}}
\newcommand*{\Hom}{\ensuremath{\mathrm{Hom\,}}}
\newcommand*{\rank}{\ensuremath{\mathrm{rank\,}}}
\begin{document}

\title{Characterization of Classes of Polynomial Functions}
\author{J.~M.~Almira, L.~Sz\'ekelyhidi}

\subjclass[2010]{Primary 43B45, 39A70; Secondary 39B52.}

\keywords{polynomial, abelian group, variety.}

\let\thefootnote\relax\footnotetext{The research was supported by the Hungarian National Foundation for Scientific Research (OTKA),   Grant No. NK-81402.}

\address{Departamento de Matem\'{a}ticas, Universidad de Ja\'{e}n, E.P.S. Linares,  C/Alfonso X el Sabio, 28, 23700 Linares, Spain}
\email{jmalmira@ujaen.es}
\address{Institute of Mathematics, University of Debrecen, Egyetem t\'er 1, 4032 Debrecen, Hungary --- Department of Mathematics, University of Botswana, 4775 Notwane Rd. Gaborone, Botswana }
\email{lszekelyhidi@gmail.com}

\maketitle
\begin{abstract}
In this paper some classes of local polynomial functions on abelian groups are characterized by the properties of their variety. For this characterization we introduce a numerical quantity depending on the variety of the local polynomial only. Moreover, we show that the known characterization of polynomials among generalized polynomials can be simplified: a generalized polynomial is a polynomial if and only if its variety contains finitely many linearly independent additive functions.
\end{abstract}

\section{Introduction}
Polynomials on commutative groups play a basic role in functional equations and in spectral synthesis. The most common definition of polynomial functions depends on Fr\'echet's Functional Equation (see \cite{Fre09, MazOrl34, MR0001560}). Given a commutative group $G$ we denote by $\C G$ the {\it group algebra} of $G$, which is the algebra of all finitely supported complex valued functions defined on $G$. Besides the linear operations (addition and multiplication by scalars) the multiplication is defined by convolution
\begin{equation*}
\mu*\nu(x)=\sum_{y\in G} \mu(x-y)\nu(y)
\end{equation*}
for each $x$ in $G$. With these operations $\C G$ is a commutative complex algebra with identity $\delta_o$, where $o$ is the zero element in $G$ and for each $y$ in $G$ we use the notation $\delta_y$ for the characteristic function of the singleton $\{y\}$. Elements of this algebra of the form 
\begin{equation*}
\Delta_y=\delta_{-y}-\delta_0
\end{equation*}
with $y$ in $G$ are called {\it differences}.
\vskip.3cm

Using the notation $\mathcal C(G)$ for the linear space of all complex valued functions on $G$, it is a module over $\C G$ with the obvious definition
\begin{equation*}
\mu*f(x)=\sum_{y\in G} f(x-y)\mu(y)
\end{equation*}
for each $x$ in $G$. 
\vskip.3cm

The function $f:G\to\C$ is called a {\it generalized polynomial} of degree at most $n$, if $n$ is a natural number and
\begin{equation}\label{Frech1}
\Delta_{y_1,y_2,\dots,y_{n+1}}*f=0\,,
\end{equation}
where we use the notation $\Delta_{y_1,y_2,\dots,y_{n+1}}$ for the convolution product 
\begin{equation*}
\Delta_{y_1}*\Delta_{y_2}*\dots*\Delta_{y_{n+1}}\,.
\end{equation*}
The smallest $n$ with this property is called the {\it degree} of $f$. In \cite{MR0265798} Djokovi\v c proved that condition \eqref{Frech1}, which is called {\it Fr\'echet's Functional Equation}, is equivalent to the condition
\begin{equation}\label{Frech2}
\Delta_y^{n+1}*f=0\,,
\end{equation}
where $\Delta_y^{n+1}=\Delta_{y_1,y_2,\dots,y_{n+1}}$ with $y=y_1=y_2=\dots=y_{n+1}$. We note that sometimes \eqref{Frech2} is also called Fr\'echet's Functional Equation. In \cite{MR0265798} Djokovi\v c showed that the two functional equations \eqref{Frech1} and \eqref{Frech2} are equivalent  for complex valued functions on every abelian group (see also \cite{Sze14b}).
\vskip.3cm

Polynomials of degree at most one, which vanish at zero, are called {\it additive functions}. They are characterized by the equation
\begin{equation*}
a(x+y)=a(x)+a(y)\,,
\end{equation*}
that is, they are exactly the homomorphisms of $G$ into the additive group of complex numbers. All additive functions on $G$ form a linear space, which is denoted by $\Hom(G,\C)$.
\vskip.3cm

There is a vast literature on different types of polynomials, which play a basic role in the theory of functional equations. In \cite{MR2165414} M.~Laczkovich studies the relations of diverse concepts of polynomials. The reader will find further references and results in this respect in \cite{MazOrl34, MR2582364,  MR954205, MR2433311,  MR1113488, MR0001560}.
\vskip.3cm

A special class of generalized polynomials is formed by those functions, which belong to the function algebra generated by the additive functions and the constants. These functions are simply called {\it polynomials}. Hence the general form of a polynomial is
\begin{equation}\label{poly}
p(x)=P\bigl(a_1(x),a_2(x),\dots,a_n(x)\bigr)
\end{equation}
for each $x$ in $G$, where $a_1,a_2,\dots,a_n:G\to\C$ are additive functions and $P:\C^n\to\C$ is an ordinary polynomial in $n$ variables. In the case $G=\R^n$ or $G=\C^n$ it is well-known (see e.g. \cite{MR1113488}), that every continuous generalized polynomial is a polynomial, in fact, it is an ordinary polynomial. In particular, in this case the additive functions in \eqref{poly} are continuous, assuming that they are linearly independent, which we always may suppose.
\vskip.3cm

The following theorems hold true (see e.g. \cite[Theorem 2. and Theorem 3.]{MR2167990}, \cite[Theorem 8.]{MR2433311}, \cite[Theorem 4.]{MR2968200}).

\begin{theorem}\label{char1}
Let $G$ be an abelian group. A generalized polynomial on $G$ is a polynomial if and only if the dimension of $\tau(f)$ is finite.
\end{theorem}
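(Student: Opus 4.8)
The plan is to prove the two implications separately, the forward one being routine and the converse carrying the real content. First I would treat the easy direction: if $f$ is a polynomial, then $\dim\tau(f)<\infty$. Writing $f(x)=P\bigl(a_1(x),\dots,a_n(x)\bigr)$ with $a_i\in\Hom(G,\C)$ and $P$ an ordinary polynomial of degree $d$, the additivity of the $a_i$ gives, for the translate by any $y\in G$,
\[
f(x+y)=P\bigl(a_1(x)+a_1(y),\dots,a_n(x)+a_n(y)\bigr),
\]
which is again a polynomial of degree at most $d$ in the same functions $a_1,\dots,a_n$. Hence every translate of $f$ lies in the finite-dimensional subspace $W$ spanned by the monomials $a_1^{k_1}\cdots a_n^{k_n}$ with $k_1+\dots+k_n\le d$. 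Since $W$ is finite-dimensional it is closed in the topology of pointwise convergence, so $\tau(f)\subseteq W$ and $\dim\tau(f)\le\dim W<\infty$.

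Now the converse, which is the substantial part. Assume $f$ is a generalized polynomial of degree at most $n$ and put $V:=\tau(f)$, a finite-dimensional variety. Since differences commute with translations, every translate of $f$ — and, because the solution set of \eqref{Frech2} is closed under pointwise limits, every element of $V$ — is again a generalized polynomial of degree at most $n$. Consequently each difference operator $\Delta_y$ restricts to a nilpotent operator on $V$ (indeed $\Delta_y^{\,n+1}=0$ there), so the family $\{\Delta_y|_V:y\in G\}$ is a commuting family of nilpotent operators on the finite-dimensional space $V$; equivalently, since $\tau_y=I+\Delta_y$, the translations act on $V$ as commuting unipotent operators.

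To finish I would invoke the structure theory of finite-dimensional varieties: such a variety is the linear span of the exponential monomials it contains, that is, of functions of the form $p\cdot m$ with $p$ a polynomial and $m$ an exponential, and whenever such a product lies in the variety the exponential $m$ lies in it as well. Applying this to $V$, every exponential $m$ occurring is an element of $V$ and hence a generalized polynomial; but from $\Delta_y^{\,k}*m=(m(y)-1)^{k}\,m$ one sees that an exponential is a generalized polynomial only when $m(y)=1$ for all $y$, i.e. $m\equiv 1$. Therefore $V$ is spanned by exponential monomials with trivial exponential, that is, by polynomials, and in particular $f\in V$ is a polynomial.

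I expect the main obstacle to be the structure result used in the last step — that a finite-dimensional variety is spanned by the exponential monomials it contains and that the associated exponentials already belong to the variety. This is the point where finite-dimensionality is genuinely used, and it rests on spectral synthesis for finite-dimensional varieties over arbitrary abelian groups. If one wished to avoid citing it, the same conclusion could be reached directly from the nilpotent action above: filtering $V$ by the subspaces $V^{(0)}=\{0\}$ and $V^{(k)}=\{g\in V:\Delta_y g\in V^{(k-1)}\text{ for all }y\in G\}$ exhausts $V$, and an induction on $k$ would show that $V^{(k)}$ consists of polynomials of degree at most $k$. The delicate step in that approach is the inductive one: from the fact that all differences $\Delta_y g$ lie in a fixed finite-dimensional space of polynomials one must recover that the top homogeneous part of $g$ — the diagonalization of a symmetric multiadditive function — has finite rank and is thus a genuine polynomial, which is exactly where the finiteness of $\dim V$ enters.
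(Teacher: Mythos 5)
The paper does not prove Theorem \ref{char1} at all: it is quoted from the literature (\cite{MR2167990}, \cite{MR2433311}, \cite{MR2968200}), so there is no in-paper argument to measure yours against. Judged on its own, your proof is correct. The forward direction is the standard computation and is fine (the only point worth stating is the one you do state: a finite-dimensional subspace of $\mathcal C(G)$ is closed, so it contains the variety generated by $f$). In the converse you have correctly located where all the content sits: the structure theorem that a finite-dimensional variety on an abelian group is spanned by the exponential monomials it contains, with the associated exponentials themselves lying in the variety (see \cite{MR1113488}). Granting that, your computation $\Delta_y*m=(m(y)-1)\,m$ forces every exponential in $\tau(f)$ to be identically $1$, and the conclusion follows. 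One caveat to make explicit: in that structure theorem ``exponential monomial'' must mean $p\cdot m$ with $p$ a polynomial in the strong sense of \eqref{poly}, i.e.\ built from additive functions and constants, not merely a generalized polynomial --- that is indeed how the theorem is stated and proved (via the commuting unipotent action you describe), so there is no circularity, but your proof stands or falls with that citation. Your alternative sketch --- filtering $V$ by the order of vanishing under differences and showing the symmetric multiadditive components have finite rank --- is closer in spirit to how the cited sources and the paper's own proof of Theorem \ref{findim} actually argue (expressing $A_n(x,y_2,\dots,y_n)$ through finitely many additive functions via Cramer's rule); as you acknowledge, you have not carried out its delicate inductive step, so the spectral-synthesis route is the one that actually closes your argument.
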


\begin{theorem}\label{genpol}
Let $G$ be an abelian group. Every generalized polynomial on $G$ is a polynomial if and only if the dimension of $\Hom(G,\C)$ is finite.
\end{theorem}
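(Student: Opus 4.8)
The plan is to derive the statement from Theorem \ref{char1}, which asserts that a generalized polynomial $f$ is a polynomial exactly when $\dim\tau(f)<\infty$. Thus the claim amounts to showing that \emph{every} generalized polynomial has a finite-dimensional variety precisely when $\dim\Hom(G,\C)<\infty$. I would treat the two implications separately, using throughout the classical homogeneous decomposition of a generalized polynomial of degree at most $N$ into a sum of generalized monomials $m_0+m_1+\dots+m_N$, where $m_k(x)=A_k(x,\dots,x)$ for a symmetric $k$-additive function $A_k\colon G^k\to\C$, together with the fact that such a monomial is an ordinary polynomial if and only if $A_k$ is a finite sum of products of additive functions, equivalently $A_k$ has finite rank.

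For the sufficiency, assume $\dim\Hom(G,\C)=n<\infty$. First I would observe that the set $\mathcal P_N$ of all generalized polynomials of degree at most $N$ is a variety: it is translation invariant and closed under pointwise limits, being cut out by the linear conditions $\Delta_y^{N+1}*f=0$. Since every additive function is a linear combination of a fixed basis $a_1,\dots,a_n$ of $\Hom(G,\C)$, fixing all but one argument of a $k$-additive function shows that every symmetric $k$-additive function is a linear combination of the products $a_{i_1}\otimes\dots\otimes a_{i_k}$, whence the space of degree-$k$ monomials has dimension at most $\binom{n+k-1}{k}$. Summing over $k\le N$ gives $\dim\mathcal P_N<\infty$. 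For an arbitrary generalized polynomial $f$ of degree $N$ one then has $\tau(f)\subseteq\mathcal P_N$, so $\dim\tau(f)<\infty$, and Theorem \ref{char1} yields that $f$ is a polynomial.

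For the necessity I would argue by contraposition: assuming $\dim\Hom(G,\C)=\infty$, I produce a generalized polynomial that is not a polynomial. Since each homomorphism $G\to\C$ factors through $G\otimes\mathbb{Q}$, we have $\Hom(G,\C)\cong\Hom(G\otimes\mathbb{Q},\C)$, so infinitude of this dimension forces $G\otimes\mathbb{Q}$ to be infinite-dimensional over $\mathbb{Q}$; hence $G$ contains a free abelian subgroup $F=\bigoplus_{i\ge 1}\Z e_i$ of infinite rank. On $F$ the coordinate functionals are defined and $B_0(x,y)=\sum_i x_i y_i$ is a finite sum for each pair, giving a symmetric bi-additive function on $F$ of infinite rank. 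Using that $\C$ is a divisible, hence injective, $\Z$-module, I would extend $B_0$ one variable at a time to a bi-additive function on $G\times G$ and symmetrize it, obtaining a symmetric bi-additive $B$ whose restriction to $F$ is $B_0$; as the rank of a restriction cannot exceed the rank of the form, $B$ has infinite rank.

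Finally, setting $f(x)=B(x,x)$, a generalized monomial of degree $2$, I would note that if $f$ were a polynomial then polarization would express $B(x,y)=\tfrac12\bigl(f(x+y)-f(x)-f(y)\bigr)$ as a finite combination of products $c_i(x)c_j(y)$ of additive functions, contradicting the infinite rank of $B$; equivalently $\dim\tau(f)=\infty$, so $f$ is not a polynomial by Theorem \ref{char1}. I expect the main obstacle to lie in the necessity direction, specifically in the construction and correct extension of an infinite-rank symmetric bi-additive form and in the clean verification that its diagonal fails to be a polynomial; the sufficiency direction is essentially a dimension count once the monomial decomposition is in place.
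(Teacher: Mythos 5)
Your argument is correct. Note first that the paper does not actually prove Theorem~\ref{genpol}: it is quoted with references to \cite{MR2167990}, \cite{MR2433311} and \cite{MR2968200}, so there is no in-paper proof to compare against. Your route is essentially the standard one from those sources: the sufficiency is the dimension count showing that when $\dim\Hom(G,\C)=n<\infty$ every symmetric $k$-additive function is a finite sum of products of the basis additive functions (the Cramer's-rule induction you allude to is exactly the device used in the paper's proof of Theorem~\ref{findim}), whence $\tau(f)\subseteq\mathcal P_N$ is finite dimensional and Theorem~\ref{char1} applies; the necessity is the construction of an infinite-rank symmetric bi-additive form whose diagonal is a degree-two generalized polynomial that is not a polynomial, which is the counterexample used in \cite{MR2167990} (there the condition is phrased via the torsion-free rank of $G$, which your observation $\Hom(G,\C)\cong\Hom(G\otimes\mathbb{Q},\C)$ correctly identifies with $\dim\Hom(G,\C)$). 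Two points deserve a word more care than your sketch gives them: in the ``one variable at a time'' extension of $B_0$, the first step uses that $F$ is free (extend $x\mapsto B_0(x,e_j)$ for each basis element and reassemble), while the second step uses that $\Hom(G,\C)$, not just $\C$, is divisible and hence injective; and in the final rank argument the factors produced by expanding $P\bigl(a_1(x)+a_1(y),\dots\bigr)$ are products of powers of additive functions rather than additive functions themselves, which is harmless since any finite sum $\sum_{l=1}^m u_l(x)v_l(y)$ bounds the rank of the matrix $\bigl(B(e_i,e_j)\bigr)$ by $m$, contradicting $\bigl(B_0(e_i,e_j)\bigr)$ being the infinite identity matrix. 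Neither point is a gap, only a place where the write-up should be made explicit.
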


If $G$ is finitely generated, then it is easy to see that every generalized polynomial on $G$ is a polynomial (see e.g. \cite[Theorem 2. and Theorem 3.]{MR2167990}). 
\vskip.3cm

In \cite{Lacz13} M.~Laczkovich introduced the concept of local polynomials. A function $f:G\to\C$ is called a {\it local polynomial}, if its restriction to every finitely generated subgroup is a polynomial. By the previous remark, every generalized polynomial is a local polynomial, however, as it is shown in \cite{Lacz13}, there are local polynomials, which are not generalized polynomials. 

\section{Results}

For the sake of simplicity a generalized polynomial, which is not a polynomial will be called a {\it fake polynomial}.

\begin{theorem}\label{findim}
Let $G$ be an abelian group. If $f:G\to\C$ is a fake polynomial,  then the linear space spanned by all additive functions in $\tau(f)$ is infinite dimensional.
\end{theorem}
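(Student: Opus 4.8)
The plan is to prove the contrapositive: assuming the additive functions in $\tau(f)$ span a finite-dimensional space $V=\mathrm{span}(a_1,\dots,a_m)$, I will show that $f$ is a polynomial, and hence not fake. I proceed by induction on the degree $n$ of $f$. For $n\le 1$ the function $f$ has the form $a+c$ with $a$ additive and $c$ constant, which is already a polynomial, so there is nothing to prove.

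First I isolate the top homogeneous part. Writing $f=\sum_{k=0}^{n}f_k$, where $f_k(x)=A_k(x,\dots,x)$ is the diagonalization of a symmetric $k$-additive function $A_k$ (with $A_n\neq 0$ since $\deg f=n$), I would show that every one-variable reduction $x\mapsto A_n(x,y_2,\dots,y_n)$ lies in $\tau(f)$. Since $\tau(f)$ is translation invariant it is stable under the operators $\Delta_y$, and a direct computation gives
\[
\Delta_{y_2}*\cdots*\Delta_{y_n}*f=n!\,A_n(\,\cdot\,,y_2,\dots,y_n)+C(y_2,\dots,y_n),
\]
where $C$ is constant in the running variable (the parts $f_k$ with $k<n-1$ are annihilated by the $n-1$ differences, and $f_{n-1}$ contributes only a constant). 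As the full $n$-fold difference $\Delta_{y_1}*\cdots*\Delta_{y_n}*f=n!\,A_n(y_1,\dots,y_n)$ is a nonzero constant function lying in $\tau(f)$, all constants belong to $\tau(f)$, and therefore $A_n(\,\cdot\,,y_2,\dots,y_n)\in\tau(f)$ for every $y_2,\dots,y_n$. Being additive, these reductions lie in $V$.

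The core of the argument is then the following factorization statement, which I would prove by induction on $k$: if a symmetric $k$-additive function $A$ has the property that each one-variable reduction $x\mapsto A(x,x_2,\dots,x_k)$ belongs to $V$, then $A$ is a finite combination of products $a_{i_1}(x_1)\cdots a_{i_k}(x_k)$, so that its diagonalization is a polynomial in $a_1,\dots,a_m$. For the inductive step I would choose points $p_1,\dots,p_m\in G$ with $\det\bigl(a_i(p_j)\bigr)\neq 0$ (possible since the $a_i$ are linearly independent), and solve the linear system arising from $A(x_1,x_2,\dots,x_k)=\sum_i\lambda_i(x_2,\dots,x_k)\,a_i(x_1)$ to express each coefficient $\lambda_i$ as a linear combination of the symmetric $(k-1)$-additive functions $A(p_j,\,\cdot\,,\dots,\,\cdot\,)$. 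The key point — and the step I expect to be the main obstacle — is to verify that the $\lambda_i$ again satisfy the reduction hypothesis: using the symmetry of $A$ one rewrites $A(p_j,x_2,\dots,x_k)=A(x_2,p_j,x_3,\dots,x_k)$, whose dependence on $x_2$ is itself a reduction of $A$ and hence lies in $V$; thus each reduction of $\lambda_i$ lies in $V$, and the induction hypothesis applies. Applied to $A_n$, this shows that $f_n$ is a polynomial.

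Finally I would close the outer induction. Since $f_n$ is a polynomial, Theorem \ref{char1} gives $\dim\tau(f_n)<\infty$. Setting $g=f-f_n$, a generalized polynomial of degree at most $n-1$, one has $\tau(g)\subseteq\tau(f)+\tau(f_n)$, so it remains to check that the additive functions in $\tau(g)$ still span a finite-dimensional space. This follows from the elementary estimate that for subspaces $X,Y,Z$ with $\dim Y<\infty$ one has $\dim\bigl((X+Y)\cap Z\bigr)\le\dim(X\cap Z)+\dim Y$ (obtained from the map $(X+Y)\cap Z\to (X+Y)/X$ with kernel $X\cap Z$); applied with $X=\tau(f)$, $Y=\tau(f_n)$ and $Z=\Hom(G,\C)$ it bounds the span of the additive functions in $\tau(g)$ by $m+\dim\tau(f_n)<\infty$. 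The induction hypothesis then yields that $g$ is a polynomial, whence $f=f_n+g$ is a polynomial, contradicting the assumption that $f$ is fake. This proves the theorem.
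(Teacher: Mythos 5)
Your proposal is correct, and its engine is the same as the paper's: both arguments hinge on choosing points $p_1,\dots,p_m$ with $\det\bigl(a_i(p_j)\bigr)\neq 0$ and inverting the resulting linear system to express the coefficients of $A_n(x,y_2,\dots,y_n)=\sum_i\lambda_i(y_2,\dots,y_n)a_i(x)$ as linear combinations of the sections $A_n(p_j,y_2,\dots,y_n)$, then exploiting the symmetry of $A_n$ to bootstrap. The packaging differs in two genuine ways. First, your key lemma is an induction on the arity of the symmetric multiadditive function and delivers a stronger structural conclusion --- a full tensor decomposition $A_n(x_1,\dots,x_n)=\sum c_{i_1\dots i_n}a_{i_1}(x_1)\cdots a_{i_n}(x_n)$ --- whereas the paper inducts on the number of identified (diagonal) variables in $A_n(x,\dots,x,y_{k+1},\dots,y_n)$ and only concludes that each partial diagonal is a polynomial in $x$; your verification that the coefficients $\lambda_i$ again satisfy the reduction hypothesis (via $A(p_j,x_2,\dots)=A(x_2,p_j,\dots)$) is exactly the role symmetry plays in the paper's step, so this point is sound. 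Second, you dispose of the lower-degree terms by an outer induction on $\deg f$, propagating the finite-dimensionality hypothesis to $g=f-f_n$ via the inequality $\dim\bigl((X+Y)\cap Z\bigr)\le\dim(X\cap Z)+\dim Y$ with $Y=\tau(f_n)$ finite-dimensional, while the paper instead performs an initial reduction to the case where the top diagonal term is itself fake and derives a contradiction; both routes are valid, and yours has the minor advantage of not needing that preliminary normalization, at the cost of the extra (but elementary and correctly proved) dimension estimate. All the supporting computations you cite --- the $(n-1)$-fold difference formula producing $n!\,A_n(\cdot,y_2,\dots,y_n)$ modulo constants, the fact that constants lie in $\tau(f)$ because the $n$-fold difference is a nonzero constant, and the identification of your $V$ with $\tau(f)\cap\Hom(G,\C)$ --- check out.
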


\begin{proof}
As $f$ is fake polynomial, hence its degree $n$ is at least two. We have the unique representation
\begin{equation}\label{fake}
f(x)=A_n(x,x,\dots,x)+A_{n-1}(x,x,\dots,x)+\dots+A_2(x,x)+A_1(x) +C\,,
\end{equation}
where $A_k:G^k\to\C$ is $k$-additive and symmetric for $k=1,2,\dots,n$, and $C$ is a constant. Suppose first that there is an integer $k$ with $1\leq k\leq n$ such that the functions $x\mapsto A_j(x,x,\dots,x)$ are polynomials for $j=k, k+1,\dots,n$. Then $k\geq 3$. Let 
\begin{equation*}
g(x)=A_n(x,x,\dots,x)+A_{n-1}(x,x,\dots,x)+\dots+A_k(x,x,\dots,x)
\end{equation*}
for each $x$ in $G$. Then $g$ is a polynomial, hence $\tau(g)$ is finite dimensional, consequently $\tau(g)$ contains finitely many linearly independent additive functions. As obviously we have 
\begin{equation*}
\tau(f-g)\subseteq \tau(f)+\tau(g)\,,
\end{equation*}
hence in order to prove or statement it is enough to show that in $\tau(f-g)$ there are infinitely many linearly independent additive functions. This argument shows that we may suppose in \eqref{fake} that $x\mapsto A_n(x,x,\dots,x)$ is a fake polynomial.
\vskip.3cm

An easy computation (taking $n-1$-th differences of $f$) shows that all additive functions of the form
\begin{equation*}
x\mapsto A_n(x,y_2,\dots,y_n)
\end{equation*}
with $y_2,y_3,\dots,y_n$ in $G$ are included in $\tau(f)$. Suppose that the linear space $A_f$ of all additive functions in $\tau(f)$ is finite dimensional. Then we show by induction on $k$ that all functions 
\begin{equation*}
x\mapsto A_n(x,x,\dots,x,y_{k+1},y_{k+2},\dots,y_n)
\end{equation*}
are polynomials, where $1\leq k\leq n$. This statement is obvious for $k=1$, as the function $A_n(x,y_2,\dots,y_n)$ is additive, hence it is a polynomial for each $y_2,\dots,y_n$ in $G$.
\vskip.3cm

 Suppose now that $k\geq 2$ and we have proved that all functions 
 \begin{equation*}
 x\mapsto A_n(x,x,\dots,x,y_{k},y_{k+1},\dots,y_n)
 \end{equation*}
 are polynomials, and we prove this for $k+1$ instead of $k$. 
 \vskip.3cm
 
 Let $\{a_i(x)\}_{i=1}^N$ be a basis of $A_f$. Then we have that there exists points $\{x_j\}_{j=1}^N$ in $G$ such that the matrix $\bigl(a_i(x_j)\bigr)$ is regular,  and we have
 \[
 A_n(x,y_2,\cdots,y_n)=\sum_{i=1}^N c_i(y_2,\cdots,y_n)a_i(x)
 \]
with certain functions $c_i:G^{n-1}\to\C$, $i=1,2,\dots,N$. It follows
 \[
 A_n(x_j,y_2,\cdots,y_n)=\sum_{i=1}^N c_i(y_2,\cdots,y_n) a_i(x_j)
 \]
 and, using that $(a_i(x_j))$ is regular, we obtain, by Cramer's Rule
 \[
 c_i(y_2,\dots,y_n)=\frac{D_i(y_2,\dots,y_n)}{D}\,.
 \]
Here $D$ is the determinant of the matrix $(a_i(x_j))$, and $D_i$ is the determinant obtained from $D$ by replacing its $i$-th column with the vector whose $j$-th component is $A_n(x_j,y_2,\cdots,y_n)$.  It follows that $D_i(y_2,y_3,\dots,y_n)$ is a linear combination of the $A_n(x_j,y_2,\cdots,y_n)$'s for $j=1,2,\dots,n$. In particular, by our assumption, and by the symmetry of $A_n$, for each $2\leq k<n$ when substituting $y_2=y_3=\dots=y_k=x$, the function
\begin{equation*}
x\mapsto D_i(x,x,\dots,x,y_{k+1},\dots,y_n)
\end{equation*}
is a polynomial.
\vskip.3cm

On the other hand, we have
 \[
 A_n(x,y_2,\cdots,y_n)=\frac{1}{D}\sum_{i=1}^N D_i(y_2,\dots,y_n) a_i(x)\,.
 \]
 Thus, the substitution $y_2=\cdots=y_k=x$ and the symmetry of $A_n$ implies that 
 \begin{equation*}
 A(x,x,x,\cdots,x,y_{k+1},...,y_n) = \frac{1}{D}\sum_{i=1}^N D_i(x,x,\dots,x,y_{k+1},\dots,y_n) a_i(x)\,,
 \end{equation*}
 which is a polynomial. Our proof is complete. 
\end{proof}

\begin{corollary}
Let $G$ be an abelian group. Then a generalized polynomial on $G$ is a polynomial if and only if the linear space generated by all additive functions in its variety is finite dimensional.
\end{corollary}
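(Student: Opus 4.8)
The plan is to derive this biconditional directly from the two theorems already established, by splitting it into its two implications and invoking the appropriate result for each.

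For the forward implication, suppose that the generalized polynomial $f$ is a polynomial. Then Theorem \ref{char1} tells us that $\tau(f)$ is finite dimensional. The additive functions lying in $\tau(f)$ form a linear subspace of $\tau(f)$, so the space they span is a subspace of a finite dimensional space and is therefore automatically finite dimensional. This direction is immediate and needs no further argument.

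For the converse I would argue by contraposition, letting Theorem \ref{findim} do all the work. Suppose $f$ is a generalized polynomial such that the linear space spanned by all additive functions in $\tau(f)$ is finite dimensional, and assume toward a contradiction that $f$ is not a polynomial. Since $f$ is by hypothesis a generalized polynomial, being not a polynomial means precisely that $f$ is a fake polynomial in the terminology of Section 2. Theorem \ref{findim} then applies and yields that the linear space spanned by all additive functions in $\tau(f)$ is infinite dimensional, contradicting our hypothesis. Hence $f$ must be a polynomial, which completes the proof.

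Since both directions reduce to previously established results, I do not expect any genuine obstacle here; the corollary is essentially a repackaging of Theorem \ref{findim} together with the easy half of Theorem \ref{char1}. The only point worth keeping in mind is that the statement is asserted for generalized polynomials specifically, so that the phrase \emph{not a polynomial} really does coincide with \emph{fake polynomial} and Theorem \ref{findim} is legitimately applicable in the contrapositive step.
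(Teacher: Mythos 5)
Your proof is correct and matches the paper's intent: the corollary is stated there without proof precisely because it follows immediately by combining Theorem \ref{char1} (for the easy direction) with the contrapositive of Theorem \ref{findim}, exactly as you argue. Your remark that ``not a polynomial'' coincides with ``fake polynomial'' only under the standing hypothesis that $f$ is a generalized polynomial is the right point to flag, and it is handled correctly.
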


In connection with this Corollary the following question arises: is it true that for each fake polynomial $f$ of degree at least three the variety generated by the differences $\Delta_y*f$ for each $y$ in $G$ contains a fake polynomial? In the affirmative case this would imply the statement of our Corollary above. However, this statement is not true as it is shown by the following example.
\vskip.3cm

Let $G=\Z_{\omega}$ denote the direct sum of $\omega$ copies of $\Z$. In other words, $\Z_{\omega}$ is the set of all integer valued finitely supported functions on $\N$ with the pointwise addition. Obviously, $\Z_{\omega}$ is the so called {\it weak direct product} of $\omega$ copies of $\Z$, hence it is a subgroup of the (complete) direct product of $\omega$ copies of $\Z$. We denote the elements of $G$ by $x=(x_i)_{i\in\N}$.
\vskip.3cm

We define the function $f:G\to\C$ by
\begin{equation*}
f(x)=\sum_{i\in \N} x_i^3
\end{equation*}
whenever $x$ is in $G$. Let $y$ be in $G$, then we can proceed as follows:
\begin{equation*}
f(x+y)-f(x)=\sum_{i\in \N} (3x_i^2 y_i+3 x_i y_i^2+y_i^3)=3\sum_{i\in \N}x_i^2 y_i+\sum_{i\in \N} (3 x_i y_i^2+y_i^3)\,.
\end{equation*} 
Obviously, both sums are finite. The second sum is additive plus constant, hence it is a polynomial. In the first sum $x_i^2$ has a nonzero coefficient if and only if $y_i\ne 0$, hence for only a finite set of $i$'s, which is independent of $x$, it is depending on $y$, only. It follows that $f$ is a polynomial. On the other hand, taking second differences of $f$, it is clear that $\tau(f)$ includes all the projections $x\mapsto x_i$, which are linearly independent. It follows, that $\tau(f)$ is infinite dimensional, hence $f$ is not a polynomial, by Theorem \ref{char1}.
\vskip.3cm

Let $G$ be an abelian group and let $f:G\to\C$ be a local polynomial. Let $\tau(f)$ denote the variety generated by $f$. For each positive integer $r$ we introduce the following quantity:
\begin{equation*}
d_f(r)=\sup_{\rank H\leq r} \dim \tau(f|_H)\,,
\end{equation*}
where the supremum is taken over all finitely generated subgroups $H$ of rank at \hbox{most $r$,} and $f|_H$ denotes the restriction of $f$ onto $H$. Clearly, $\dim \tau(f|_H)$ is finite for each finitely generated subgroup $H$. It is also obvious that the function $d_f$ is increasing.
\vskip.3cm

We have the following result.

\begin{theorem}
Let $G$ be an abelian group and let $f:G\to\C$ be a local polynomial. Then we have the following statements:
\begin{enumerate}[i)]
\item $d_f(r)=+\infty$ for each $r\geq 2$ if and only if $f$ is not a generalized polynomial;
\item $d_f(r)<+\infty$ for each $r$ and $\lim_{r\to \infty} d_f(r)=+\infty$ if and only if $f$ is a fake polynomial;
\item $\lim_{r\to \infty} d_f(r)<+\infty$ if and only if $f$ is a polynomial.
\end{enumerate}
\end{theorem}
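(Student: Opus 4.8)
The plan is to prove the three ``forward'' implications---that if $f$ is a polynomial then iii) holds, if $f$ is a fake polynomial then ii) holds, and if $f$ is a local polynomial which is not a generalized polynomial then i) holds---and then to promote these to equivalences by a counting argument. Every local polynomial belongs to exactly one of the three classes (polynomial, fake polynomial, neither), while the three conditions on $d_f$ are pairwise incompatible: since $d_f$ is increasing we have $\lim_{r\to\infty} d_f(r)=\sup_r d_f(r)$, so iii) forces all $d_f(r)$ finite with finite supremum, ii) forces all $d_f(r)$ finite but with infinite supremum, and i) forces $d_f(2)=+\infty$. Hence the three regimes are mutually exclusive, and once the three forward implications are in hand each of them reverses automatically.

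Statement iii) is the quickest: if $f$ is a polynomial then $\dim\tau(f)<\infty$ by Theorem \ref{char1}, and for any finitely generated $H$ every $H$-translate of $f|_H$ is the restriction of a translate of $f$, so the restriction map carries $\tau(f)$ onto a space containing $\tau(f|_H)$; thus $\dim\tau(f|_H)\le\dim\tau(f)$ for all $H$ and $\lim_r d_f(r)\le\dim\tau(f)<\infty$. For i), suppose $f$ is a local polynomial but not a generalized polynomial. By the equivalence of \eqref{Frech1} and \eqref{Frech2}, for every $n$ there is some $y$ with $\Delta_y^{n+1}*f\ne 0$; fixing $x$ with $(\Delta_y^{n+1}*f)(x)\ne 0$ and setting $H=\langle x,y\rangle$, which has rank at most two, the functions $f|_H,\ \Delta_y*(f|_H),\ \dots,\ \Delta_y^{n+1}*(f|_H)$ are nonzero and of strictly decreasing degree, hence linearly independent members of $\tau(f|_H)$, so $\dim\tau(f|_H)\ge n+2$. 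Letting $n\to\infty$ gives $d_f(2)=+\infty$, and monotonicity yields $d_f(r)=+\infty$ for all $r\ge 2$.

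It remains to treat ii) for a fake polynomial $f$ of degree $n$. For finiteness, a finitely generated $H$ of rank at most $r$ is of the form $\Z^r\oplus T$ with $T$ finite, so $\dim\Hom(H,\C)=r$ and $f|_H$ is a polynomial of degree at most $n$ in at most $r$ additive variables; the space of such polynomials has dimension at most $\binom{n+r}{r}$, and since $\tau(f|_H)$ is contained in it we get $d_f(r)\le\binom{n+r}{r}<\infty$. For the infinitude of the supremum I would invoke Theorem \ref{findim}: as in its proof, the additive functions $x\mapsto A_n(x,y_2,\dots,y_n)$ lying in $\tau(f)$ span an infinite dimensional space, and each of them is, up to a constant, the additive part of a difference $\Delta_{y_2}*\cdots*\Delta_{y_n}*f$. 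Given $k$, choose increments making $k$ of these additive functions linearly independent and let $H$ contain all of them; because convolution is local, restricting the corresponding differences to $H$ produces $\Delta_{y_2}*\cdots*\Delta_{y_n}*(f|_H)\in\tau(f|_H)$, and since a variety containing an additive-plus-constant function contains its additive part, the $k$ restricted additive functions lie in $\tau(f|_H)$. Enlarging $H$ by finitely many points witnessing their independence keeps them independent on $H$, so $\dim\tau(f|_H)\ge k$; as $k$ independent additive functions force $\rank H\ge k$, letting $k\to\infty$ shows $\sup_r d_f(r)=+\infty$.

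The main obstacle I anticipate is precisely this last step. Theorem \ref{findim} delivers its infinitely many additive functions inside the \emph{global} variety $\tau(f)$, but a typical element of $\tau(f)$ is a limit of combinations $\mu*f$ with $\mu\in\C G$ whose restriction to $H$ need not lie in $\tau(f|_H)$ when $\mu$ is supported outside $H$. The resolution is to abandon abstract elements of $\tau(f)$ in favour of the concrete additive functions produced by iterated differences, exploiting the locality of convolution so that increments chosen inside $H$ keep the whole construction inside $\tau(f|_H)$, while simultaneously bookkeeping the growth of $\rank H$ so that the resulting blow-up of $\dim\tau(f|_H)$ is spread across increasing ranks---this is exactly what separates regime ii) from regime i).
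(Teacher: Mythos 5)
Your proposal is correct and follows essentially the same route as the paper: the three forward implications plus mutual exclusivity, with i) obtained from unbounded degree on rank-two subgroups, ii) from the infinitely many independent additive functions supplied by Theorem \ref{findim} pushed down into $\tau(f|_H)$ for suitable finitely generated $H$, and iii) from the finite-dimensionality of $\tau(f)$ for polynomials. You even tighten two points the paper treats briefly --- the explicit bound $d_f(r)\leq\binom{n+r}{r}$ for the finiteness half of ii), and the observation that the additive functions must be taken as concrete iterated differences (not abstract elements of the closure $\tau(f)$) so that they restrict into $\tau(f|_H)$ --- but these are refinements of the same argument, not a different one.
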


\begin{proof}
Suppose that $f$ is not a generalized polynomial. It follows that for each positive integer $n$ there exist $x,y$ in $G$ such that 
\begin{equation*}
\Delta_y^{n+1}*f(x)\ne 0\,.
\end{equation*} 
Let $H$ denote the subgroup generated by $x$ and $y$, then the restriction $f|_H$ of $f$ to $H$ is a polynomial of degree at least $n+1$. It follows that there are elements $y_1,y_2,\dots,y_n$ in $H$ such that the function $\Delta_{y_1,y_2,\dots,y_k}*f|_H$ is a polynomial of degree $n-k$ for $k=1,2,\dots,n$, hence the functions
\begin{equation*}
f|_H, \Delta_{y_1}*f|_H,\dots,\Delta_{y_1,y_2,\dots,y_n}*f|_H
\end{equation*}
are linearly independent and they belong to $\tau(f|_H)$. Hence the dimension of $\tau(f|_H)$ is at least $n+1$, which gives $d_f(r)=+\infty$ for each $r\geq 2$.
\vskip.3cm

Suppose that $f$ is a fake polynomial. Then, by Theorem \ref{findim}, there is a sequence $(a_k)_{k=1}^{\infty}$ of linearly independent additive functions in $\tau(f)$. Let $N$ be a positive integer and we take elements $x_1,x_2,\dots,x_N$ in $G$ such that the matrix $\bigl(a_k(x_j)\bigr)_{k,j=1}^N$ is regular. Let $L$ be any finitely generated subgroup including the elements $x_1,x_2,\dots,x_N$. Then the restriction $f|_L$ of $f$ to $L$ is a polynomial, hence $\tau(f|_L)$ is finite dimensional -- it is equal to the linear span of all translates of $f|_L$. The restrictions of the $a_k$'s to $L$ are linearly independent for $k=1,2,\dots,N$, and they are linear combinations of some translates of $f$ -- let $K$ be the set of all elements of $G$, which occur as translating elements to obtain the $a_k$'s for $k=1,2,\dots,N$. Let $H$ denote the subgroup generated by the $L$ and $K$, then $H$ is a finitely generated subgroup of $G$ and the restrictions of the $a_k$'s belong to $\tau(f|_H)$ for $k=1,2,\dots,N$. It follows that the dimension of $\tau(f|_H)$ is at least $N$, hence $\lim_{r\to\infty} d_r(f)=+\infty$. On the other hand, obviously $d_r(f)$ is finite for each $r$.
\vskip.3cm

Finally, suppose that $f$ is a polynomial. Then it has the form
\begin{equation}\label{polform}
f(x)=P\bigl(a_1(x),a_2(x),\dots,a_k(x)\bigr)
\end{equation}
with some ordinary polynomial $P:\C^k\to\C$ and linearly independent additive functions $a_1,a_2,\dots,a_k:G\to\C$. We have, by Taylor Formula,
\begin{equation}\label{Tay}
f(x+y)=\sum_{|\alpha|\leq \deg P} \frac{1}{\alpha !}\, \partial^{\alpha} P\bigl(a_1(x),\dots,a_k(x)\bigr) a_1(y)^{\alpha_1}\dots a_k(y)^{\alpha_k}
\end{equation}
for each $x,y$ in $G$, where $\alpha=(\alpha_1,\alpha_2,\dots,\alpha_k)$ is a multi-index with 
$$
|\alpha|=\alpha_1+\alpha_2+\dots+\alpha_k\,.
$$ 

It is known (see e.g. \cite[Theorem 3.2.7]{MR1113488},  p.~33.) that the functions
\begin{equation*}
y\mapsto a_1(y)^{\alpha_1}\dots a_k(y)^{\alpha_k}
\end{equation*}
are linearly independent for different choices of the  multi-index $\alpha$ (here we use the convention $0^0=1$). It follows that the functions
\begin{equation*}
x\mapsto \partial^{\alpha} P\bigl(a_1(x),\dots,a_k(x)\bigr)
\end{equation*}
are linear combinations of translates of $f$, hence they belong to $\tau(f)$. Moreover, by the above equation, these functions generate $\tau(f)$. The number of the different functions of these type is not greater than $(\deg f+1)^k$. It follows that $\lim_{r\to \infty} d_f(r)\leq  (\deg f+1)^k<+\infty$.
\vskip.3cm

The theorem is proved.
\end{proof}


\end{document}